\newtheorem{thm}{Theorem}[section]
\newtheorem{lem}{Lemma}[section]
\newtheorem{rem}{Remark}[section]
\theoremstyle{notation}
\newcommand{\R}{\mathbb{R}}
\newcommand{\C}{\mathbb{C}}
\numberwithin{equation}{section}
\makeatletter \@addtoreset{equation}{section} \makeatother
\newcounter{const}
\author[T. Gou]{Tianxiang Gou}
\author[{}]{{}}
\address[T. Gou]{%
\centerline{School of Mathematics and Statistics,Xi’an Jiaotong University,}
\centerline{710049, Xi’an, Shaanxi, China}}
\subjclass[2010]{Primary: 35Q55; Secondary: 35B44, 35B35.}
\keywords{Blowup; Biharmonic NLS; Cylindrical symmetry solutions}
\email{tianxiang.gou@xjtu.edu.cn}
\title[Blowup for 4NLS]{Blowup of cylindrically symmetric solutions for biharmonic NLS}
\thanks{Statements and Declarations. The author declares that there are no conflict of interests.}
\thanks{The author was supported by the National Natural Science Foundation of China (No. 12101483) and the Postdoctoral Science Foundation of China.}
\begin{document}

\begin{abstract}
In this paper, we consider blowup of solutions to the Cauchy problem for the following biharmonic nonlinear Schr\"odinger equation (NLS),
$$
\textnormal{i} \, \partial_t u=\Delta^2 u-\mu \Delta u-|u|^{2 \sigma} u \quad \text{in} \,\, \R \times \R^d,
$$
where $d \geq 1$, $\mu \in \R$ and $0<\sigma<\infty$ if $1 \leq d \leq 4$ and $0<\sigma<4/(d-4)$ if $d \geq 5$. In the mass critical and supercritical cases, we establish the existence of blowup solutions to the problem for cylindrically symmetric data. The result extends the known ones with respect to blowup of solutions to the problem for radially symmetric data.
\end{abstract}
\maketitle
\thispagestyle{empty}

\section{Introduction}

In this paper, we are concerned with blowup of cylindrically symmetric solutions to the Cauchy problem for the following biharmonic NLS,
\begin{align} \label{equ}
\textnormal{i} \, \partial_t u=\Delta^2 u-\mu \Delta u-|u|^{2 \sigma} u \quad \text{in} \,\, \R \times \R^d,
\end{align}
where $d \geq 1$, $\mu \in \R$ and $0<\sigma<\infty$ if $1 \leq d \leq 4$ and $0<\sigma<4/(d-4)$ if $d \geq 5$. The first study of biharmonic NLS traces back to Karpman \cite{K} and Karpman-Shagalov \cite{KS}, where the authors investigated the regularization and stabilization effect of the fourth-order dispersion. Later, Fibich et al. \cite{FIP} carried out a rigorous survey to biharmonic NLS from mathematical point of views and proved global existence in time of solutions to the Cauchy problem for \eqref{equ}. During recent years, there is a large number of literature mainly devoted to the study of well-posedness and scattering of solutions to the Cauchy problem for \eqref{equ}, see for example \cite{D, G, MXZ, P, P1, P2, PX} and references therein. In \cite{BL}, Boulenger and Lenzmann rigorously and completely discussed the existence of blowup solutions to the Cauchy problem for \eqref{equ} with radially symmetric initial data, which in turn confirms a series of numerical studies conducted in \cite{BFi, BFM1, BFM2, BF}. We also refer to the readers to the papers due to Bonheure et al. \cite{BF, BCGJ1, BCGJ2} with respect to orbital instability of radially symmetric standing waves to \eqref{equ}. Inspired by the aforementioned works, the aim of the present paper is to investigate blowup of solutions to the Cauchy problem for \eqref{equ} with cylindrically symmetric initial data, i.e. initial data belong to $\Sigma_d$ defined by
$$
\Sigma_d :=\left\{ u \in H^2(\R^d) : u(y, x_d) =u(|y|, x_d), x_d u \in L^2(\R^d)\right\},
$$
where $x=(y, x_d) \in \R^d$ and $y=(x_1, \cdots, x_{d-1}) \in \R^{d-1}$. 

For further clarifications, we shall fix some notations. Let us define 
$$
s_c:=\frac d 2 -\frac {2}{\sigma}.
$$
We refer to the cases $s_c<0$, $s_c=0$ and $s_c>0$ as mass subcritical, critical and supercritical, respectively. The end case $s_c=2$ is energy critical. Note that the cases $s_c=0$ and $s_c=2$ correspond to the exponents $\sigma=4/d$ and $\sigma=4/(d-4)$, respectively. For $1 \leq p <\infty$, we denote by $L^q(\R^{d})$ the usual Lebesgue space with the norm
$$
\|u\|_p:=\left( \int_{\R^d} |u|^p \,dx \right)^{\frac 1p}.
$$
The Sobolev space $H^2(\R^d)$ is equipped with the standard norm
$$
\|u\|:=\|\Delta u\|_2 + \|\nabla u\|_2 +\|u\|_2.
$$
In addition, we denote by $Q \in H^2(\R^d)$ a ground state to the following nonlinear elliptic equation,
$$
\Delta^2 Q +Q -|Q|^{2\sigma} Q=0 \quad \text{in} \,\, \R^d.
$$

The main results of the present paper read as follows, which gives blowup criteria for solutions to the Cauchy problem for \eqref{equ} with cylindrically symmetric data. 

\begin{thm} \label{thm1}\textnormal{(Blowup for Mass-Supercritical Case)}
Let $d \geq 5$, $\mu \in \R$ and $0<s_c<2$ with $0<\sigma \leq 1$. Suppose that $u_0 \in \Sigma_d$ satisfies one of the following conditions.
\begin{itemize}
\item [$(\textnormal{i})$] If $\mu \neq 0$, we assume that
\begin{align*}
E[u_0] < \left\{
\begin{aligned}
&0, \quad &\text{for} \,\, \mu>0, \\
&-\chi \mu^2 M[u_0], \quad &\text{for} \,\, \mu<0,
\end{aligned}
\right.
\end{align*}
with some constant $\chi=\chi(d, \sigma)>0$.
\item [$(\textnormal{ii})$] If $\mu=0$, we assume that either $E[u_0]<0$ or if $E[u_0] \geq 0$, we suppose that
$$
E[u_0]^{s_c}M[u_0]^{2-s_c}<E[Q]^{s_c}M[Q]^{2-s_c}
$$
and
$$
\|\Delta u_0\|_2^2 \|u_0\|_2^{2-s_c}>\|\Delta Q\|_2^2 \|Q\|_2^{2-s_c}.
$$
Then the solution $u \in C([0, T), H^2(\R^d))$ to the Cauchy problem for \eqref{equ} with initial datum $u_0$ blows up in finite time, i.e. $0<T<+\infty$ and $\lim_{t \to T^-} \|\Delta u\|_2=+\infty$.
\end{itemize}
\end{thm}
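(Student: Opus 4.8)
The plan is to run a localized virial argument in the spirit of \cite{BL}, but with a weight that is \emph{exact} in the $x_d$-direction---where $u_0\in\Sigma_d$ provides the control $x_du\in L^2$---and radially \emph{truncated} in the variable $y\in\R^{d-1}$, where no weighted bound is available. Throughout write the conserved energy and mass as
$$
E[u]=\tfrac12\|\Delta u\|_2^2+\tfrac{\mu}{2}\|\nabla u\|_2^2-\tfrac{1}{2\sigma+2}\|u\|_{2\sigma+2}^{2\sigma+2},\qquad M[u]=\|u\|_2^2,
$$
and introduce the scaling virial functional
$$
\mathcal K[u]:=2\|\Delta u\|_2^2+\mu\|\nabla u\|_2^2-\tfrac{d\sigma}{2\sigma+2}\|u\|_{2\sigma+2}^{2\sigma+2}=\tfrac{d}{d\lambda}E[u_\lambda]\big|_{\lambda=1},
$$
where $u_\lambda=\lambda^{d/2}u(\lambda\,\cdot)$ is the mass-preserving dilation. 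I define the truncated variance
$$
\mathcal V_R[u]:=\int_{\R^d}\big(x_d^2+\psi_R(y)\big)|u|^2\,dx,\qquad \psi_R(y):=R^2\psi(|y|/R),
$$
with $\psi$ smooth, radial, $\psi(r)=r^2$ for $r\le1$, bounded with $\psi''\le2$ for $r\ge1$, so that the weight coincides with $|x|^2$ on $\{|y|\le R\}$. A preliminary step, as in \cite{BL}, records that $\Sigma_d$ is preserved by the flow and that $\mathcal V_R[u(t)]$ together with its first two time derivatives are well defined.

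The first main step is the localized virial identity for the biharmonic flow \eqref{equ}. Because $\mathcal K$ already carries the correct dispersive weights ($4$ on $\|\Delta u\|_2^2$, $2$ on $\mu\|\nabla u\|_2^2$), differentiating $\mathcal V_R$ twice in $t$ and using \eqref{equ} gives
$$
\frac{d^2}{dt^2}\mathcal V_R[u]=c_0\,\mathcal K[u]+\mathcal E_R[u],
$$
for an absolute constant $c_0>0$, where the exact-quadratic part produces $c_0\mathcal K[u]$ and the remainder $\mathcal E_R[u]$ collects all contributions supported in $\{|y|\ge R\}$; these involve the derivatives $\partial^2\psi_R,\partial^3\psi_R,\partial^4\psi_R$ (of respective sizes $O(1),O(R^{-1}),O(R^{-2})$) paired with quadratic derivative densities of $u$, together with the nonlinear error $\sim\int_{|y|\ge R}|u|^{2\sigma+2}\,dx$.

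The second step makes the leading term strictly negative under the hypotheses. Using conservation of energy,
$$
\mathcal K[u]=d\sigma\,E[u_0]-\Big(\tfrac{d\sigma}{2}-2\Big)\|\Delta u\|_2^2-\mu\Big(\tfrac{d\sigma}{2}-1\Big)\|\nabla u\|_2^2,
$$
and since $s_c>0$ is equivalent to $d\sigma>4$, the coefficient $\tfrac{d\sigma}{2}-2$ is positive, producing a coercive term $-c\|\Delta u\|_2^2$. For $\mu>0$ the gradient term is also nonpositive, so $E[u_0]<0$ already forces $\mathcal K[u]\le d\sigma E[u_0]-c\|\Delta u\|_2^2<0$; for $\mu<0$ I estimate $\|\nabla u\|_2^2\le\|u\|_2\|\Delta u\|_2$ and apply Young's inequality to absorb the now-positive gradient term into $-c\|\Delta u\|_2^2$ at the expense of a constant $C\mu^2M[u_0]$, which is exactly the origin of the threshold $E[u_0]<-\chi\mu^2M[u_0]$ with $\chi=\chi(d,\sigma)>0$. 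In case (ii) with $\mu=0$ and $E[u_0]\ge0$, I instead use the sharp Gagliardo--Nirenberg inequality $\|u\|_{2\sigma+2}^{2\sigma+2}\le C_{\mathrm{GN}}\|\Delta u\|_2^{d\sigma/2}\|u\|_2^{2\sigma+2-d\sigma/2}$, optimized by $Q$; the two scale-invariant assumptions on $E^{s_c}M^{2-s_c}$ and on $\|\Delta u_0\|_2^2\|u_0\|_2^{2-s_c}$ guarantee, through a standard continuity/trapping argument built on conservation of $E$ and $M$, that $\|\Delta u(t)\|_2^2\|u(t)\|_2^{2-s_c}>\|\Delta Q\|_2^2\|Q\|_2^{2-s_c}$ persists for all $t$, and on this invariant set $\mathcal K[u]\le-\delta<0$ uniformly.

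The third and hardest step is to show that $\mathcal E_R[u]$ is negligible against the coercive term. This is where the cylindrical symmetry is decisive: since $u$ is radial in $y\in\R^{d-1}$, a Strauss-type bound gives on each slice $\|u(\cdot,x_d)\|_{L^\infty(|y|\ge R)}\lesssim R^{-(d-2)/2}\|u(\cdot,x_d)\|_{L^2_y}^{1/2}\|\nabla_y u(\cdot,x_d)\|_{L^2_y}^{1/2}$. Inserting this into $\int_{|y|\ge R}|u|^{2\sigma+2}$, integrating in $x_d$, and using $\|\nabla u\|_2\lesssim\|u\|_2^{1/2}\|\Delta u\|_2^{1/2}$, I expect a bound of the form $|\mathcal E_R[u]|\le\eta(R)\big(1+\|\Delta u\|_2^{\theta}\big)$ with $\theta<2$ and $\eta(R)\to0$ as $R\to\infty$. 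The restrictions $d\ge5$ and $0<\sigma\le1$ enter precisely here: $d\ge5$ yields the decay $R^{-(d-2)/2}$ with $d-2\ge3$, while $\sigma\le1$ keeps the power $\theta$ of $\|\Delta u\|_2$ strictly below $2$, so that the whole error is absorbed by Young's inequality into $\tfrac{c}{2}\|\Delta u\|_2^2$. I anticipate that the careful accounting of the higher-order terms $\partial^3\psi_R,\partial^4\psi_R$ and of the mixed $x_d$--$y$ interactions in the biharmonic virial is the principal technical obstacle. Once $R$ is fixed large enough, the three steps combine to give $\frac{d^2}{dt^2}\mathcal V_R[u]\le-\tfrac{c}{2}\|\Delta u\|_2^2+C\le-\tfrac{c}{4}\|\Delta u\|_2^2<0$; together with the a priori bound $\big|\tfrac{d}{dt}\mathcal V_R[u(t)]\big|\le C(R)\big(1+\|x_du(t)\|_2\big)\|\Delta u(t)\|_2^{1/2}$ and a standard ODE comparison controlling the growth of $\|x_du(t)\|_2$, this forces $T<\infty$ and $\lim_{t\to T^-}\|\Delta u(t)\|_2=+\infty$, as claimed.
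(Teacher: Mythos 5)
There is a genuine gap, and it sits at the very foundation of your argument: the claimed identity
$$
\frac{d^2}{dt^2}\mathcal V_R[u]=c_0\,\mathcal K[u]+\mathcal E_R[u]
$$
is false for the biharmonic flow. For fourth-order Schr\"odinger equations the variance does not obey a convexity identity whose leading term is energy-subcritical. Already for the free equation $\textnormal{i}\,\partial_t u=\Delta^2 u$ one computes on the Fourier side that
$$
\|x u(t)\|_2^2=\big\|\nabla_\xi\big(e^{-\textnormal{i}t|\xi|^4}\hat u_0\big)\big\|_2^2
=\big\|\nabla_\xi\hat u_0-4\textnormal{i}t|\xi|^2\xi\,\hat u_0\big\|_2^2,
$$
so that $\frac{d^2}{dt^2}\|xu(t)\|_2^2=32\,\||\nabla|^3u_0\|_2^2$: the second derivative of the variance is an $\dot H^3$ quantity, not a multiple of $\|\Delta u\|_2^2$. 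Equivalently, differentiating $\int\varphi|u|^2\,dx$ once along \eqref{equ} produces $-2\,\textnormal{Im}\int\nabla\varphi\cdot\big(\overline u\,\nabla\Delta u-\nabla\overline u\,\Delta u\big)\,dx$ plus lower-order terms, a third-order expression that is not the momentum virial and is not controlled by the conserved energy. No choice of cutoff $\psi_R$ fixes this; the obstruction is structural, and it is exactly why Boulenger--Lenzmann, and the paper you are reconstructing, never touch the variance. Instead they take as the basic object the first-order (momentum-type) quantity $\mathcal M_{\varphi_R}[u]=2\,\textnormal{Im}\int\overline u\,(\nabla\varphi_R\cdot\nabla u)\,dx$ of \eqref{defm}, whose \emph{single} time derivative yields, by the paper's Lemma 2.2, $4\mathcal K[u]$ plus localization errors (i.e.\ $8\|\Delta u\|_2^2+4\mu\|\nabla u\|_2^2-\tfrac{2\sigma d}{\sigma+1}\|u\|_{2\sigma+2}^{2\sigma+2}+\dots$). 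Finite-time blowup then follows not from two integrations in time but from an ODE argument on $\mathcal M_{\varphi_R}$ itself: once $\frac{d}{dt}\mathcal M_{\varphi_R}[u]\le -c\|\Delta u\|_2^2$, the interpolation bound $|\mathcal M_{\varphi_R}[u]|\lesssim_R\|u\|_2^{3/2}\|\Delta u\|_2^{1/2}$ gives $\frac{d}{dt}\mathcal M_{\varphi_R}[u]\lesssim -c'\,|\mathcal M_{\varphi_R}[u]|^4$ eventually, which blows up in finite time.

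The rest of your proposal contains the correct ingredients, and they do mirror the paper: the coercivity analysis of $\mathcal K[u]$ (your Step 2, including the Young-inequality origin of the threshold $-\chi\mu^2M[u_0]$ for $\mu<0$ and the trapping argument for case (ii)) matches what is imported from \cite[Theorem 1]{BL}, and your Step 3 correctly identifies that the nonlinear tail $\int_{|y|\ge R}|u|^{2\sigma+2}\,dx$ must be handled by the radial Sobolev inequality \eqref{st} in $\R^{d-1}$ (whence $d\ge5$) combined with a one-dimensional argument in $x_d$ — the paper uses $\|u\|^2_{L^\infty_{x_d}L^2_y}\lesssim\|\partial_d u\|_2$ and the 1D Gagliardo--Nirenberg inequality \eqref{gn}, with $\sigma\le1$ keeping the resulting power $\|\nabla u\|_2^{2\sigma}$ absorbable. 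But these estimates must be mounted on the monotonicity of $\mathcal M_{\varphi_R}$, not on a second derivative of a truncated variance; as written, your Step 1 cannot be carried out, and the error term $\mathcal E_R[u]$ you would actually encounter contains uncontrollable $\dot H^3$ contributions.
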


\begin{rem}
The extra restriction on $\sigma$ comes from the use of the well-known radial Sobolev inequality in $\R^{d-1}$. Note that if $4/d<\sigma \leq 1$, then $d \geq 5$. This is the reason that we need to assume that $d \geq 5$. 
\end{rem}

\begin{thm} \label{thm2} \textnormal{(Blowup for Mass-Critical Case)}
Let $d \geq 4$, $\mu \geq 0$ and $s_c=0$. Let $u_0 \in \Sigma_d$ be such that $E(u_0)<0$. Then the solution $u \in C([0, T), H^2(\R^d))$ to the Cauchy problem for \eqref{equ} satisfies the following.
\begin{itemize}
\item[$(\textnormal{i})$] If $\mu>0$, then $u(t)$ blows up in finite time.
\item [$(\textnormal{ii})$] If $\mu=0$, then $u(t)$ either blows up in finite time or $u(t)$ blows up in infinite time. 
\end{itemize}
\end{thm}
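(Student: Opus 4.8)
The plan is to run a localized virial (Morawetz-type) argument adapted to the cylindrical symmetry, extending the radial scheme of \cite{BL} in the spirit of the cylindrical NLS works \cite{BFG, DK, M}. Write $x=(y,x_d)$ with $y\in\R^{d-1}$ the radial variable and $x_d\in\R$ the axial one, so that the hypothesis $x_d u\in L^2(\R^d)$ provides finite variance only in the axial direction and the full weight $|x|^2$ is unavailable. I therefore fix a radial-in-$y$ cutoff $\psi_R$ with $\psi_R(r)=r^2$ for $r\le R$, $\psi_R''\le 2$ everywhere, and $\psi_R$ bounded together with its derivatives for $r\ge 2R$, and set $\phi_R(x):=x_d^2+\psi_R(|y|)$. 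Then $V_R(t):=\int_{\R^d}\phi_R|u(t)|^2\,dx$ is finite for every $t$, since $\int x_d^2|u|^2\le\|x_d u\|_2^2$ and $\int\psi_R|u|^2\le C_R\|u\|_2^2$, the mass being conserved. Crucially, $V_R(t)\ge 0$, and this nonnegativity is what will ultimately force blowup.

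After recording the conserved mass and energy, I would compute the second derivative $V_R''(t)$ via the biharmonic virial identity for \eqref{equ}. In the bulk region $|y|\le R$ one has $\phi_R(x)=|x|^2$, so there the identity reproduces the exact virial. Using $s_c=0$, i.e. $\sigma=4/d$, together with energy conservation, the scaling derivative of the energy satisfies the borderline relation that annihilates the $\|\Delta u\|_2^2$ contribution, and the bulk collapses to $c_1\,E[u_0]-c_2\,\mu\|\nabla u(t)\|_2^2$ for some $c_1,c_2>0$. Everything else is an error $\mathcal{E}_R(t)$ supported in $|y|>R$, coming from the truncation of $\psi_R$ in the bilaplacian and Laplacian parts and from the nonlinearity, so that
$$
V_R''(t)\;\le\; c_1\,E[u_0]\;-\;c_2\,\mu\,\|\nabla u(t)\|_2^2\;+\;\mathcal{E}_R(t).
$$
The vanishing of the leftover $\|\Delta u\|_2^2$ term is precisely what makes this mass-critical case delicate and is responsible for the dichotomy in part (ii).

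The heart of the matter is controlling $\mathcal{E}_R$, and this is the step I expect to be the main obstacle. Because $u$ is radial in $y\in\R^{d-1}$ (here $d\ge 4$ gives $d-1\ge 3$), I would invoke the radial Sobolev inequality in $\R^{d-1}$ to gain pointwise decay in $|y|$; after integrating in $x_d$ this yields, for the nonlinear part, a bound of the form $\int_{|y|>R}|u|^{2\sigma+2}\,dx\le C\,R^{-\sigma(d-2)}\|u\|_2^{a}\,\|\nabla u\|_2^{\sigma}$, whose power of $\|\nabla u\|_2$ is strictly below $2$ since $\sigma=4/d\le 1$. The design $\psi_R''\le 2$ with bounded higher derivatives arranges that the truncation only subtracts a nonnegative quantity from the leading-order bilaplacian term, so the genuinely uncontrolled part of $\mathcal{E}_R$ is confined to this subquadratic nonlinear error together with terms carrying an explicit factor $R^{-2}$ times conserved or lower-order norms. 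Applying Young's inequality to the nonlinear error then gives, for any $\varepsilon>0$,
$$
V_R''(t)\;\le\; c_1\,E[u_0]\;-\;c_2\,\mu\,\|\nabla u(t)\|_2^2\;+\;\varepsilon\,\|\nabla u(t)\|_2^2\;+\;C_{\varepsilon,R},
$$
where $C_{\varepsilon,R}\to 0$ as $R\to\infty$ for each fixed $\varepsilon$. The sharp bookkeeping of the mixed $(y,x_d)$ norms in this estimate is the one genuinely technical point.

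This single inequality drives both parts. For \emph{(i)} with $\mu>0$, choose $\varepsilon=c_2\mu$ to absorb the $\varepsilon\|\nabla u\|_2^2$ term into $-c_2\mu\|\nabla u\|_2^2$, leaving $V_R''(t)\le c_1E[u_0]+C_{\varepsilon,R}$; since $E[u_0]<0$, taking $R$ large gives $V_R''(t)\le\tfrac{c_1}{2}E[u_0]<0$ uniformly in $t$. Integrating twice, $V_R(t)\le V_R(0)+V_R'(0)\,t+\tfrac{c_1}{4}E[u_0]\,t^2$ turns negative in finite time, contradicting $V_R\ge 0$; hence the maximal time $T$ is finite and, by the standard blowup alternative for \eqref{equ}, $\|\Delta u(t)\|_2\to\infty$ as $t\to T^-$. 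For \emph{(ii)} with $\mu=0$ there is no coercive term to absorb $\varepsilon\|\nabla u\|_2^2$, so I instead keep $\mathcal{E}_R$ in the form $C\,R^{-\sigma(d-2)}\|\Delta u(t)\|_2^{\sigma/2}$ via the interpolation $\|\nabla u\|_2\le\|u\|_2^{1/2}\|\Delta u\|_2^{1/2}$ and $\sigma\le 1$. Were the solution global with $\|\Delta u(t)\|_2$ bounded, the right-hand side would be $\le\tfrac{c_1}{2}E[u_0]<0$ for $R$ large, again forcing $V_R$ negative in finite time, which is impossible. Therefore either $T<\infty$ (finite-time blowup) or $T=\infty$ with $\|\Delta u(t)\|_2$ unbounded as $t\to\infty$ (blowup in infinite time), which is exactly the stated dichotomy.
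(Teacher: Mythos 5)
Your proposal rests on a quantity that does not obey the identity you need: for the \emph{biharmonic} equation \eqref{equ}, the time derivative of the weighted mass $V_R(t)=\int_{\R^d}\phi_R|u|^2\,dx$ is \emph{not} the momentum-type virial $\mathcal{M}_{\phi_R}[u]=2\,\textnormal{Im}\int\overline{u}\,\nabla\phi_R\cdot\nabla u\,dx$. A direct computation using $\partial_t|u|^2=2\,\textnormal{Im}(\overline{u}\,\Delta^2u)-2\mu\,\textnormal{Im}(\overline{u}\,\Delta u)$ gives
\begin{equation*}
\frac{d}{dt}V_R(t)=4\,\textnormal{Im}\int_{\R^d}\left(\nabla\phi_R\cdot\nabla\overline{u}\right)\Delta u\,dx+2\,\textnormal{Im}\int_{\R^d}\left(\Delta\phi_R\right)\overline{u}\,\Delta u\,dx
+2\mu\,\textnormal{Im}\int_{\R^d}\overline{u}\,\nabla\phi_R\cdot\nabla u\,dx,
\end{equation*}
which pairs $\Delta u$ against $\nabla\overline{u}$ and $\overline{u}$; these terms are not controlled by conserved quantities, and differentiating them once more does not close into an expression of the form $c_1E[u_0]-c_2\mu\|\nabla u\|_2^2+\mathcal{E}_R$. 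In other words, there is no ``biharmonic virial identity'' for the variance, and this is precisely the obstruction emphasized by Boulenger--Lenzmann \cite{BL}: it is the reason both \cite{BL} and the present paper take the \emph{first-order} quantity $\mathcal{M}_{\varphi_R}[u]$ of \eqref{defm} as the monotone object, prove a first-order differential inequality for it (Lemma \ref{v}, refined in \eqref{vm}, \eqref{m1}, \eqref{m2} when $\mu=0$), and conclude by an ODE comparison rather than by convexity: for $\mu>0$ one absorbs the errors into $-4\mu\|\nabla u\|_2^2$ and runs a Riccati-type argument using $|\mathcal{M}_{\varphi_R}[u]|\lesssim_R\|u\|_2\|\nabla u\|_2$ to force $T<\infty$; for $\mu=0$ one assumes $T=\infty$ with $\sup_t\|\Delta u(t)\|_2<\infty$, fixes $R$ large to get $\frac{d}{dt}\mathcal{M}_{\varphi_R}\le 8E[u_0]<0$, and contradicts the boundedness of $|\mathcal{M}_{\varphi_R}[u(t)]|\lesssim_R\|\Delta u(t)\|_2^{1/2}$, which yields exactly the finite-or-infinite-time dichotomy in (ii). Since your entire blowup mechanism (integrate twice, contradict $V_R\ge0$) hinges on the unavailable identity, this is a genuine gap, not a technicality.

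A second, independent problem sits in your error estimates in the critical case $d=4$, $\sigma=1$, $\mu=0$. The cylindrical radial-Sobolev argument gives the nonlinear error $R^{-\sigma(d-2)}\|\nabla u\|_2^{2\sigma}$ (see \eqref{s1}--\eqref{s2}), not $\|\nabla u\|_2^{\sigma}$; at $\sigma=1$ this is \emph{exactly quadratic} in $\|\nabla u\|_2$, so it cannot be split by Young's inequality into $\varepsilon\|\nabla u\|_2^2+C_{\varepsilon,R}$ with $C_{\varepsilon,R}\to0$, and with $\mu=0$ there is no coercive gradient term to absorb it. This is why the paper's proof of Theorem \ref{thm2}(ii) does not use the crude bound of Lemma \ref{v} at all, but instead retains the negative quadratic form $-8\int(1-\partial_r^2\psi_R-\eta(R^4A_R^2+B_R^2))|\partial_r^2u|^2\,dx$ and estimates the tail terms against it via the weights $A_R$, $B_R$ and the inequalities $(7.7)$, $(7.9)$ of \cite{BL}, splitting the cases $d=4$ and $d\ge5$. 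Your scheme, even granting the variance identity, would not close in the case $d=4$, $\mu=0$.
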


To prove Theorems \ref{thm1} and \ref{thm2}, the essential argument is to deduce the evolution of the localized virial quantity $M_{\varphi_R}[u(t)]$ defined by \eqref{defm} along time, see Lemma \ref{v}. To this end, we shall make use of  ideas from \cite{BL, M}. It is worth mentioning \cite{BF, BFG, DF, F}, where blow-up of solutions to NLS for cylindrically symmetric data has been investigated. Comparing with the existing works, we deal with the evolution of the localized virial quantity to biharmonic NLS for cylindrically symmetric data and extra treatments are needed in the cylindrically symmetric context, because of the presence of the biharmonic term. For cylindrically symmetric solutions, the radial Sobolev inequality is only applicable in $\R^{d-1}$, which is different from the radially symmetric case handled in \cite{BL}, we shall take advantage of ingredients in \cite{M} to estimate error terms due to the nonlinearity in the process of discussion of the evolution of the localized virial quantity.

\begin{rem} 
It seems possible to remove the condition that $x_d u_0 \in L^2(\R^d)$ to study blowup of solutions to \eqref{equ} for cylindrically symmetric data in the spirit of work due to Martel \cite{M}. In this case, more restrictive conditions should be imposed on $\sigma$. This shall be discussed in forthcoming publications.
\end{rem}

\section{Proofs of main results}

In this section, we are going to prove Theorems \ref{thm1} and \ref{thm2}. To do this, we first need to introduce a localized virial quantity, which is inspired by \cite{BL} and \cite{M}. For $d \geq 2$, let $\psi : \R^{d-1} \to \R$ be a radially symmetric and smooth function such that $|\nabla \psi^j| \in L^{\infty}(\R^{d-1})$ for $1 \leq j \leq 6$ and
\begin{align*}
\psi(r):=\left\{
\begin{aligned}
&\frac{r^2}{2} &\quad \text{for} \,\, r \leq 1,\\
&const. &\quad \text{for} \,\, r \geq 10,
\end{aligned}
\right.
\quad \psi''(r) \leq 1 \,\, \text{for}\,\, r \geq 0.
\end{align*}
For $R>0$ given, we define a radial function $\psi_R : \R^{d-1} \to \R$ by
$$
\psi_R(r):= R^2 \varphi\left(\frac{r}{R}\right).
$$
It follows from $(3.3)$ in \cite{BL} that
\begin{align} \label{property}
1-\psi_R''(r) \geq 0, \quad 1-\frac{\psi'_R(r)}{r} \geq 0, \quad d-1-\Delta \psi_R(r) \geq 0, \quad r \geq 0.
\end{align}
Let 
$$
\varphi_R(x):=\psi_R(r) + \frac{x_d^2}{2}, \quad  x=(y, x_d) \in \R^{d-1} \times \R, \,\, r=|y|.
$$
Define
\begin{align} \label{defm}
\mathcal{M}_{\varphi_R}[u]:=2 \textnormal{Im} \int_{\R^d} \overline{u} \left(\nabla \varphi_R \cdot \nabla u \right) \,dx.
\end{align}
It is simple see that $\mathcal{M}_{\varphi_R}[u]$ is well-defined for any $u \in \Sigma_d$. For later use, we shall give the well-known radial Sobolev's inequality in \cite{St}. For every radial function $f \in H^1(\R^{d-1})$ with $d \geq 3$, then
\begin{align} \label{st}
|y|^{\frac{d-2}{2}} |f(y)| \leq 2 \|f\|_{2}^{\frac 12} \|\nabla f\|_{2}^{\frac 12}, \quad  y\neq 0.
\end{align}
We also present the well-known Gagliardo-Nirenberg's inequality in one dimension. For any $f \in H^1(\R)$ and $p>2$, then
\begin{align} \label{gn}
\|f\|_{p} \leq C_p \|f'\|_2^{\alpha} \|f\|_2^{1-\alpha}, \quad \alpha=\frac{p-2}{2p}.
\end{align}
Let $f : \R^{d-1} \to \C$ be a radial and smooth function, then 
\begin{align}\label{d}
\partial_{kl}^2 f=\left(\delta_{kl}-\frac{x_kx_l}{r^2}\right) \frac{\partial_r f}{r} +\frac{x_kx_l}{r^2} \partial^2_rf.
\end{align} 
Next we present the well-posedness of solutions to the Cauchy problem for \eqref{equ} in $H^2(\R^d)$, which was established by Pausader \cite{P}.

\begin{lem} \cite[Proposition 4.1]{P}
Let $d \geq 1$, $\mu \in \R$ and $s_c<2$. Then, for any $u_0 \in H^2(\R^d)$, there exist a constant $T>0$ and a unique solution $u \in C([0, T), H^2(\R^d))$ to the Cauchy problem for \eqref{equ} with initial datum $u_0$. The solution has conserved mass and energy in the sense that
$$
M[u(t)]=M[u_0], \quad E[u(t)]=E[u_0], \quad t \in [0, T),
$$
where
$$
M[u]:=\int_{\R^d} |u|^2 \, dx
$$
and
$$
E[u]=\frac 12 \int_{\R^d} |\Delta u|^2 \, dx + \frac{\mu}{2} \int_{\R^d} |\nabla u|^2 \,dx -\frac{1}{2 \sigma +2} \int_{\R^d} |u|^{2 \sigma +2} \, dx.
$$
Moreover, blowup alternative holds, i.e. either $T = + \infty $ or $\|u(t)\|_{H^2}= +\infty$ as $t\to T^-$. The solution map 
$$
u_0 \in H^2(\R^d) \mapsto u \in C([0, T), H^2(\R^d))
$$ 
is continuous.
\end{lem}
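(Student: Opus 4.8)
The plan is to establish this local well-posedness statement by the standard Strichartz-based contraction mapping scheme adapted to the fourth-order dispersion. Write the linear propagator as $S(t):=e^{-\textnormal{i}t(\Delta^2-\mu\Delta)}$, which is the Fourier multiplier with symbol $e^{-\textnormal{i}t(|\xi|^4+\mu|\xi|^2)}$ and hence a unitary group on $H^s(\R^d)$ for every $s\in\R$. The first step is to record the homogeneous, dual and inhomogeneous Strichartz estimates associated with $S(t)$, for pairs $(q,r)$ that are admissible for the biharmonic Schr\"odinger flow. With these in hand, I would recast the Cauchy problem for \eqref{equ} in its Duhamel form
\[
u(t)=S(t)u_0+\textnormal{i}\int_0^t S(t-s)\bigl(|u|^{2\sigma}u\bigr)(s)\,ds,
\]
and seek a fixed point of the associated map $\Phi$.

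The core of the argument is the contraction estimate. I would work in a complete metric space obtained by intersecting $C([0,T];H^2(\R^d))$ with suitable Strichartz spaces, restricted to a ball whose radius is comparable to $\|u_0\|$. The decisive point is the nonlinear bound: using the hypothesis $s_c<2$, which is precisely the energy-subcriticality condition ($\sigma<4/(d-4)$ when $d\geq 5$), together with Sobolev embeddings and the fractional Leibniz and chain rules, one estimates the dual-Strichartz norm of $|u|^{2\sigma}u$ by $T^{\theta}$ times a power $\|u\|_X^{2\sigma+1}$, where $\theta>0$ arises from a H\"older gain in the time variable. Choosing $T$ small then makes $\Phi$ a contraction, which yields existence, uniqueness, and---since the same estimates control the difference of two solutions---Lipschitz continuity of the data-to-solution map on bounded sets.

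Conservation of mass and energy would follow from the standard multiplier identities: pairing the equation with $\overline{u}$ and taking the imaginary part gives $\frac{d}{dt}M[u(t)]=0$, while pairing with $\partial_t\overline{u}$ and taking the real part gives $\frac{d}{dt}E[u(t)]=0$; both computations are legitimate at the $H^2$ regularity level and can be justified by a density or regularization argument. The blowup alternative is then a soft consequence of the local theory: because the existence time produced by the contraction depends only on $\|u_0\|$, if $T<\infty$ while $\limsup_{t\to T^-}\|u(t)\|<\infty$, one could restart the equation from a time close to $T$ and extend the solution beyond $T$, contradicting maximality; hence $\|u(t)\|\to+\infty$ as $t\to T^-$ whenever $T<\infty$.

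The main obstacle is the nonlinear estimate itself. For non-integer $\sigma$ the nonlinearity $|u|^{2\sigma}u$ is only H\"older-regular, so one cannot differentiate it classically and must instead invoke fractional calculus---a Kato--Ponce type Leibniz rule and a fractional chain rule---in combination with the correct biharmonic Strichartz exponents, in order to place $|u|^{2\sigma}u$ in a dual-Strichartz space at the $H^2$ level with a positive power of $T$ to spare. It is exactly the condition $s_c<2$ that guarantees these exponents are admissible and that the H\"older gain $\theta$ is strictly positive, thereby closing the contraction.
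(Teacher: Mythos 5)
The paper gives no proof of this lemma: it is quoted directly from Pausader \cite[Proposition 4.1]{P}, and your Strichartz-based contraction-mapping sketch (Duhamel formulation, nonlinear estimate with a $T^{\theta}$ gain from energy-subcriticality $s_c<2$, fractional chain rule for the nonsmooth nonlinearity, conservation laws by regularization, blowup alternative from the $\|u_0\|_{H^2}$-dependent existence time) is essentially the same strategy used in that reference. So your proposal matches the approach behind the paper's citation; there is nothing substantive to contrast.
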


In the following, we give the evolution of $\mathcal{M}[u(t)]$ along time, which is the key argument to prove Theorems \ref{thm1} and \ref{thm2}.

\begin{lem} \label{v}
Let $d \geq 3$, $R>0$ and $0<\sigma \leq 1$. Suppose that $u \in C([0, T); H^2(\R^d))$ is the solution to the Cauchy problem for \eqref{equ} with initial datum $u_0 \in \Sigma_d$. Then, for any $t \in [0, T)$, there holds that
\begin{align*}
\frac{d}{dt}\mathcal{M}_R[u(t)] &\leq 8 \int_{\R^d} |\Delta u|^2 \,dx+4 \mu \int_{\R^d} \left|\nabla u\right|^2 \,dx -\frac{2 \sigma d}{\sigma+1} \int_{\R^d}  |u|^{2 \sigma +2} \,dx + X_{\mu}[u] \\
& \quad + \mathcal{O}\left(R^{-4} +R^{-2} \|\nabla u\|_2^2 + R^{-\sigma(d-2)} \|\nabla u\|_2^{2\sigma} + |\mu|R^{-2}\right) \\
&=4d\sigma E(u_0)-(2d \sigma -8)\|\Delta u\|^2_2 -\mu (2d \sigma -4)\|\nabla u\|^2_2 + X_{\mu}[u]\\
& \quad +\mathcal{O}\left(R^{-4} +R^{-2} \|\nabla u\|_2^2 + R^{-\sigma(d-2)} \|\nabla u\|_2^{2\sigma} + |\mu|R^{-2}\right),
\end{align*}
where
\begin{align*}
X_{\mu}[u]\lesssim \left\{
\begin{aligned}
&0, &\quad \text{for} \,\, \mu \geq 0,\\
&|\mu| \|\nabla u\|_2^2 , &\quad \text{for} \,\, \mu<0.
\end{aligned}
\right.
\end{align*}
\end{lem}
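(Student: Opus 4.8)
The plan is to start from the exact biharmonic virial identity for $\frac{d}{dt}\mathcal{M}_{\varphi_R}[u(t)]$ and then separate the quadratic ``bulk'' contribution from the localization errors supported on $\{|y|\gtrsim R\}$. First I would differentiate $\mathcal{M}_{\varphi_R}[u(t)]=2\,\textnormal{Im}\int_{\R^d}\bar u\,(\nabla\varphi_R\cdot\nabla u)\,dx$ in time, insert $\textnormal{i}\,\partial_t u=\Delta^2 u-\mu\Delta u-|u|^{2\sigma}u$, and integrate by parts. Following the biharmonic virial computation in \cite{BL}, this produces an exact identity in which the nonlinearity contributes $-\frac{2\sigma}{\sigma+1}\int_{\R^d}\Delta\varphi_R\,|u|^{2\sigma+2}\,dx$, the operator $-\mu\Delta$ contributes $\mu\int_{\R^d}\big(4\,\partial_{jk}^2\varphi_R\,\textnormal{Re}(\partial_j\bar u\,\partial_k u)-\Delta^2\varphi_R\,|u|^2\big)\,dx$, and $\Delta^2$ contributes a fourth-order expression whose leading part, after integration by parts and restriction to the bulk where $\partial_{jk}^2\varphi_R=\delta_{jk}$, equals $8\|\Delta u\|_2^2$ up to a sign-definite remainder on $\{|y|\gtrsim R\}$, together with terms carrying third up to sixth derivatives of $\varphi_R$.

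Next I would use $\varphi_R=\psi_R(r)+x_d^2/2$ and the radial Hessian formula \eqref{d} to split every term. On $\{|y|\le R\}$ one has $\psi_R(r)=r^2/2$, hence $\partial_{jk}^2\varphi_R=\delta_{jk}$ and $\Delta\varphi_R=(d-1)+1=d$; collecting these bulk values yields the main terms $8\|\Delta u\|_2^2+4\mu\|\nabla u\|_2^2-\frac{2\sigma d}{\sigma+1}\|u\|_{2\sigma+2}^{2\sigma+2}$. Energy conservation then converts this into the second displayed form via the substitution $\|u\|_{2\sigma+2}^{2\sigma+2}=(\sigma+1)\|\Delta u\|_2^2+(\sigma+1)\mu\|\nabla u\|_2^2-(2\sigma+2)E[u_0]$. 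Every remaining contribution is supported on $\{|y|\gtrsim R\}$, where the rescaling $\psi_R=R^2\psi(\cdot/R)$ gives $\psi_R^{(m)}=R^{2-m}\psi^{(m)}(\cdot/R)$, so $\Delta^2\varphi_R=\mathcal{O}(R^{-2})$ and $\Delta^3\varphi_R=\mathcal{O}(R^{-4})$ on their supports, while \eqref{property} shows that $\delta_{jk}-\partial_{jk}^2\varphi_R$ is positive semidefinite.

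For the linear error terms I would argue as follows. The sixth-order term $\int\Delta^3\varphi_R|u|^2\,dx$ is $\mathcal{O}(R^{-4})M[u_0]$; the intermediate fourth-order corrections pair $\mathcal{O}(R^{-2})$ derivatives of $\psi_R$ against $|\nabla u|^2$, giving $\mathcal{O}(R^{-2}\|\nabla u\|_2^2)$; and the piece $\mu\int\Delta^2\varphi_R|u|^2\,dx$ is $\mathcal{O}(|\mu|R^{-2})$ by mass conservation. The Hessian remainders, both from $\Delta^2$ and the one from $-\mu\Delta$ given by $-4\mu\int(\delta_{jk}-\partial_{jk}^2\varphi_R)\,\textnormal{Re}(\partial_j\bar u\,\partial_k u)\,dx$, are contractions of two positive semidefinite matrices; hence they are nonpositive when $\mu\ge0$ and bounded by $|\mu|\|\nabla u\|_2^2$ when $\mu<0$. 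This is exactly the term recorded as $X_\mu[u]$, and discarding the sign-favorable remainders accounts for the ``$\le$'' in the statement.

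The genuine difficulty is the nonlinear localization error $-\frac{2\sigma}{\sigma+1}\int(\Delta\varphi_R-d)|u|^{2\sigma+2}\,dx$, supported on $\{|y|\gtrsim R\}$ with bounded coefficient, hence controlled by $\int_{|y|\gtrsim R}|u|^{2\sigma+2}\,dx$. In contrast to the radial case in $\R^d$ treated in \cite{BL}, cylindrical symmetry only furnishes decay in $y$, so I would freeze $x_d$ and apply the radial Sobolev inequality \eqref{st} in $\R^{d-1}$ to $u(\cdot,x_d)$, obtaining $|u(y,x_d)|^{2\sigma}\lesssim R^{-\sigma(d-2)}\|u(\cdot,x_d)\|_{L^2_y}^{\sigma}\|\nabla_y u(\cdot,x_d)\|_{L^2_y}^{\sigma}$ on the support. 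Writing $g(x_d)=\|u(\cdot,x_d)\|_{L^2_y}$ and $h(x_d)=\|\nabla_y u(\cdot,x_d)\|_{L^2_y}$, the remaining factor is $R^{-\sigma(d-2)}\int_{\R}g^{2+\sigma}h^{\sigma}\,dx_d$; Hölder in $x_d$ together with $\|h\|_{L^2_{x_d}}\le\|\nabla u\|_2$ and the one-dimensional Gagliardo--Nirenberg inequality \eqref{gn} applied to $g$ (using $\|g'\|_{L^2_{x_d}}\le\|\partial_{x_d}u\|_2$ and $\|g\|_{L^2_{x_d}}^2=M[u_0]$) then bound it by $R^{-\sigma(d-2)}M[u_0]\|\nabla u\|_2^{2\sigma}$. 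This produces the $R^{-\sigma(d-2)}\|\nabla u\|_2^{2\sigma}$ error, and it is precisely this step --- requiring $d\ge3$ for \eqref{st} and $0<\sigma\le1$ so that the gradient enters only to the power $2\sigma\le2$ --- that constitutes the main obstacle and the source of the restrictions. Combining all of the above gives the stated bound.
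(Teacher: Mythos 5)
Your proposal is correct and follows essentially the same route as the paper: the Boulenger--Lenzmann virial identity, the sign conditions \eqref{property} to discard the Hessian remainders (yielding the bound $8\|\Delta u\|_2^2$ and the term $X_\mu[u]$), and the radial Sobolev inequality \eqref{st} in $\R^{d-1}$ combined with the one-dimensional Gagliardo--Nirenberg inequality \eqref{gn} and the bound \eqref{de} for the nonlinear localization error on $\{|y|\geq R\}$. The only cosmetic difference is that you handle all $0<\sigma\le 1$ in a single H\"older/Gagliardo--Nirenberg computation on $\int_\R g^{2+\sigma}h^\sigma\,dx_d$, whereas the paper splits into the cases $\sigma=1$ (using $\|u\|^2_{L^\infty(\R,L^2_y)}\lesssim\|\partial_d u\|_2$) and $0<\sigma<1$.
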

\begin{proof}
To achieve this, we shall adapt some elements from \cite{BL} and \cite{M}. In view of Step 1 of the proof of \cite[Lemma 3.1]{BL}, we first have that
\begin{align*}
\frac{d}{dt}\mathcal{M}_R[u(t)]&=8\sum_{k,l,m=1}^d\left\langle u, \partial_{kl}^2\left(\partial_{lm}^2\varphi_R \right) \partial_{mk}^2 u \right\rangle +4\sum_{k,l=1}^d\left\langle u, \partial_k \left(\partial_{kl}\Delta \varphi_R \right) \partial_l u\right\rangle \\
& \quad +2\sum_{k=1}^d\left\langle u, \partial_k \left(\Delta^2 \varphi_R \right) \partial_l u\right\rangle +\left\langle u, \left(\Delta^3 \varphi_R \right) u\right\rangle  \\
& \quad  -4\mu\sum_{k,l=1}^d\left\langle u, \partial_k \left(\partial_{kl}^2\varphi_R \right) \partial_l u\right\rangle -\mu\left\langle u, \left(\Delta^2 \varphi_R \right) u\right\rangle -\left \langle u, [|u|^{2 \sigma}, \nabla \varphi_R \cdot \nabla + \nabla \cdot \nabla \varphi_R] u\right\rangle,\\
&=: \mathcal{A}_R^{(1)}[u] +\mathcal{A}_R^{(2)}[u] +\mathcal{B}_R[u].
\end{align*}
In what follows, we are going to estimate the terms $\mathcal{A}_R^{(1)}[u]$, $\mathcal{A}_R^{(2)}[u]$ and $\mathcal{B}_R[u]$. The estimates of dispersive terms $\mathcal{A}_R^{(1)}[u]$ and $\mathcal{A}_R^{(2)}[u]$ are inspired by the proof of \cite[Lemma 3.1]{BL}. Let us begin with treating the term $ \mathcal{A}_R^{(1)}[u]$. Using integration by parts and the definition of $\varphi_R$, we are able to derive that 
\begin{align}\label{a1}
\begin{split}
\sum_{k,l,m=1}^d\left\langle u, \partial_{kl}^2\left(\partial_{lm}^2\varphi_R \right) \partial_{mk}^2 u \right\rangle 
&=\sum_{k,l,m=1}^d \int_{\R^d}\left(\partial_{kl}^2 \overline{u}\right)\left(\partial_{lm}^2\varphi_R \right) \left(\partial_{mk}^2 u \right) \,dx \\
&=\sum_{k,l,m=1}^{d-1} \int_{\R^d}\left(\partial_{kl}^2 \overline{u}\right)\left(\partial_{lm}^2\psi_R \right) \left(\partial_{mk}^2 u \right) \,dx \\
& \quad + \sum_{l,m=1}^{d-1}\int_{\R^d}\left(\partial_{dl}^2 \overline{u}\right)\left(\partial_{lm}^2\psi_R \right) \left(\partial_{md}^2 u \right) \,dx \\
& \quad + \sum_{k=1}^{d-1} \int_{\R^d}\left(\partial_{kd}^2 \overline{u}\right)\left(\partial_{dk}^2 u \right) \,dx+ \int_{\R^d}\left|\partial_{dd}^2 u\right|^2 \, dx.
\end{split}
\end{align}
We now compute each term in the right hand side of \eqref{a1}. Utilizing \eqref{d}, we can derive that
\begin{align} \label{a11}
\begin{split}
&\sum_{k,l,m=1}^{d-1} \int_{\R^d}\left(\partial_{kl}^2 \overline{u}\right)\left(\partial_{lm}^2\psi_R \right) \left(\partial_{mk}^2 u \right) \,dx  \\
&=\sum_{k,l,m=1}^{d-1} \int_{\R}\int_{\R^{d-1}}\left(\partial_{kl}^2 \overline{u}\right)\left(\partial_{lm}^2\psi_R \right) \left(\partial_{mk}^2 u \right) \,dy dx_d \\
&=\int_{\R}\int_{\R^{d-1}} \left(\partial_r^2 \psi_R |\partial_r^2 u|^2 + \frac{d-2}{r^2} \frac{\partial_r \psi_R}{r} |\partial_r u|^2 \right)\, dydx_d \\
&=\int_{\R^d} |\Delta_y u|^2-\left(1-\partial_r^2 \psi_R\right)|\partial_r^2 u|^2 -\left(1-\frac{\partial_r \psi_R}{r}\right)\frac{d-2}{r^2}|\partial_r u|^2\,dx
\end{split}
\end{align}
and
\begin{align} \label{a12}
\begin{split}
\sum_{l,m=1}^{d-1}\int_{\R^d}\left(\partial_{dl}^2 \overline{u}\right)\left(\partial_{lm}^2\psi_R \right) \left(\partial_{md}^2 u \right) \,dx &= \sum_{k=1}^{d-1}\int_{\R^d}\left(\partial_{r}^2\psi_R \right) \left|\partial_{kd}^2 u \right|^2 \,dx \\
&=\sum_{k=1}^{d-1}\int_{\R^d}\left|\partial_{kd}^2 u \right|^2-\left(1-\partial_r^2 \psi_R\right)\left|\partial_{kd}^2 u \right|^2\,dx\\
&= \sum_{k=1}^{d-1} \int_{\R^d}\left(\partial_{kd}^2 \overline{u}\right)\left(\partial_{dk}^2 u \right) -\left(1-\partial_r^2 \psi_R\right)\left|\partial_{kd}^2 u \right|^2\,dx.
\end{split}
\end{align}
In addition, applying integration by parts, we have that
\begin{align} \label{i1}
\int_{\R^d}\left(\partial_{kd}^2 \overline{u}\right)\left(\partial_{dk}^2 u \right) \, dx =\int_{\R^d}\left(\partial_{kk}^2 \overline{u}\right)\left(\partial_{dd}^2 u \right) \, dx.
\end{align}
As a consequence, coming back to \eqref{a1} and using \eqref{property}, \eqref{a1}, \eqref{a11}, \eqref{a12} and \eqref{i1}, we now conclude that
\begin{align} \label{a2}
\sum_{k,l,m=1}^d\left\langle u, \partial_{kl}^2\left(\partial_{lm}^2\varphi_R \right) \partial_{mk}^2 u \right\rangle  \leq  \int_{\R^d} |\Delta u|^2 \,dx.
\end{align}
Furthermore, by the definitions of $\varphi_R$ and $\psi_R$, there holds that 
$$
\sum_{k,l=1}^d\left|\left\langle u, \partial_k \left(\partial_{kl}\Delta \varphi_R \right) \partial_l u\right\rangle \right| =\sum_{k,l=1}^{d-1}\left|\left\langle u, \partial_k \left(\partial_{kl}\Delta \psi_R\right) \partial_l u\right\rangle \right|\lesssim R^{-2} \|\nabla u\|_2^2, 
$$
$$
\sum_{k, l=1}^d\left|\left\langle u, \partial_k \left(\Delta^2 \varphi_R \right) \partial_l u\right\rangle \right| = \sum_{k, l=1}^{d-1}\left|\left\langle u, \partial_k \left(\Delta^2 \psi_R \right) \partial_l u\right\rangle \right|\lesssim R^{-2} \|\nabla u\|_2^2, 
$$
and
$$
\left|\left\langle u, \left(\Delta^3 \varphi_R \right) u\right\rangle \right| =\left|\left\langle u, \left(\Delta^3 \psi_R \right) u\right\rangle \right| \lesssim R^{-4} \|u\|_2^2.
$$
This along with \eqref{a2} and the conservation of mass implies that
$$
\mathcal{A}_R^{(1)}[u] \leq 8 \int_{\R^d} |\Delta u|^2 \,dx + \mathcal{O} \left(R^{-4} + R^{-2}\|\nabla u\|_2^2\right).
$$
We next deal with the term $\mathcal{A}_R^{(2)}[u]$. In virtue of integration by parts, the definition of $\varphi_R$ and \eqref{d}, we can show that
\begin{align*}
\mathcal{A}_R^{(2)}[u(t)]&=4 \mu \sum_{k, l=1}^d\int_{\R^d} \left(\partial_k \overline{u}\right) \left(\partial_{kl}^2 \varphi_R\right) \left(\partial_l u\right) \,dx -\mu \int_{\R^d} \left(\Delta^2 \varphi_R\right) |u|^2 \,dx \\
&=4 \mu \sum_{k, l=1}^{d-1}\int_{\R}\int_{\R^{d-1}} \left(\partial_k \overline{u}\right) \left(\partial_{kl}^2 \psi_R\right) \left(\partial_l u\right) \,dydx_d +4 \mu \int_{\R^d}\left |\partial_d u\right|^2 \,dx -\mu \int_{\R^d} \left(\Delta^2 \psi_R\right) |u|^2 \,dx  \\
&=4 \mu \int_{\R}\int_{\R^{d-1}} \left(\partial_r^2 \psi_R\right) \left|\partial_r u\right|^2 \,dydx_d +4 \mu \int_{\R^d}\left |\partial_d u\right|^2 \,dx -\mu \int_{\R^d} \left(\Delta^2 \psi_R\right) |u|^2 \,dx  \\
&=4 \mu \int_{\R^d} \left|\nabla_y u\right|^2 \,dx + X_{\mu}[u]+4 \mu \int_{\R^d}\left |\partial_d u\right|^2 \,dx-\mu \int_{\R^d} \left(\Delta^2 \psi_R\right) |u|^2 \,dx \\
&=4 \mu \int_{\R^d} \left|\nabla u\right|^2 \,dx + X_{\mu}[u]-\mu \int_{\R^d} \left(\Delta^2 \psi_R\right) |u|^2 \,dx
\end{align*}
where
$$
X_{\mu}[u]:=-4 \mu \int_{\R^d} \left(1-\partial_r^2 \psi_R\right) |\partial_r u|^2 \, dx.
$$
Due to $\|\Delta^2 \psi_R\|_{\infty} \lesssim R^{-2}$, then
\begin{align*}
\mathcal{A}_R^{(2)}[u(t)]&=4 \mu \int_{\R^d} \left|\nabla u\right|^2 \,dx + X_{\mu}[u]+\mathcal{O}(|\mu|R^{-2}).
\end{align*}
We now turn to handle the term $\mathcal{B}_{R}[u]$. Here we need some special treatments. Applying integration by parts and the definition of $\varphi_R$, we first derive that
\begin{align*} 
\mathcal{B}_{R}[u] =2 \int_{\R^d} |u|^2 \nabla \varphi_R \cdot \nabla \left(|u|^{2 \sigma}\right) &=-\frac{2 \sigma }{\sigma+1} \int_{\R^d}  \left(\Delta \varphi_R\right) |u|^{2 \sigma +2} \,dx \\
&=-\frac{2 \sigma }{\sigma+1} \int_{\R^d}  \left(\Delta \psi_R \right) |u|^{2 \sigma +2} \,dx- \frac{2 \sigma }{\sigma+1} \int_{\R^d} |u|^{2 \sigma +2} \,dx\\
&=-\frac{2 \sigma d}{\sigma+1} \int_{\R^d}  |u|^{2 \sigma +2} \,dx -\frac{2 \sigma }{\sigma+1} \int_{\R^d}\left(\Delta \psi_R -d+1\right) |u|^{2 \sigma +2} \,dx.
\end{align*}
In virtue of the definition of $\psi_R$ and \eqref{d}, then there holds that $\Delta \psi_R(r) -d+1=0$ for $0 \leq r \leq R$. This further implies that
\begin{align} \label{b1} 
\mathcal{B}_{R}[u]=-\frac{2 \sigma d}{\sigma+1} \int_{\R^d}  |u|^{2 \sigma +2} \,dx -\frac{2 \sigma }{\sigma+1} \int_{\R}\int_{|y| \geq R}  \left(\Delta \psi_R -d+1\right) |u|^{2 \sigma +2} \,dydx_d.
\end{align}
In the following, we shall estimate the second term in the right hand side of \eqref{b1}. Observe first that
\begin{align} \label{b11}
\int_{\R} \int_{|y| \geq R} |u|^{2 \sigma +2} \,dydx_d  \leq \int_{\R} \|u\|^{2 \sigma}_{L^{\infty}(|y| \geq R)} \|u\|_{L^2_y}^2 \,dx_d.
\end{align}
To proceed the proof, we first consider the case that $\sigma=1$. In this case, by \eqref{st}, H\"older's inequality and the conservation of mass, then
\begin{align} \label{b111}
\begin{split}
\int_{\R}\|u\|^2_{L^{\infty}(|y| \geq R)} \, dx_d &\lesssim R^{-(d-2)}\int_{\R} \|u\|_{L^2_y} \|\nabla_y u\|_{L^2_y} \,dx_d \\
&\leq R^{-(d-2)} \|u\|_2 \|\nabla _y u\|_2 \lesssim R^{-(d-2)} \|\nabla _y u\|_2.
\end{split}
\end{align}
On the other hand, by H\"older's inequality and the conservation of mass, we know that
\begin{align} \label{ud}
\begin{split}
\|u\|^2_{L^{\infty}(\R, {L^2_y}(\R^{d-1}))}&=\sup_{x_d \in \R} \int_{\R^{d-1}} |u|^2 \, dy = \sup_{x_d \in \R}  \int_{\R^{d-1}}  \int_{-\infty}^{x_d} \partial_{d}\left(|u|^2\right) \, dydx_d \\
&= 2 \textnormal{Re}\sup_{x_d \in \R}  \int_{\R^{d-1}} \int_{-\infty}^{x_d} \overline{u}\left(\partial_{d} u \right)  \, dydx_d \leq  2 \|u\|_2 \|\partial_d u\|_2 \lesssim \|\partial_d u\|_2.
\end{split}
\end{align}
Consequently, going back to \eqref{b11} and using \eqref{b111} and \eqref{ud}, we derive that
\begin{align} \label{s1}
\int_{\R}\int_{|y| \geq R}  |u|^4 \,dydx_d \lesssim R^{-(d-2)} \|\nabla_y u\|_2 \|\partial_d u\|_2 \lesssim R^{-(d-2)} \|\nabla u\|_2^2.
\end{align}
We next consider the case that $0<\sigma<1$. In this case, from \eqref{b11} and H\"older's inequality, it follows that
\begin{align} \label{b112}
\begin{split}
\int_{\R}\int_{|y| \geq R}  |u|^{2 \sigma +2} \,dx \leq \left(\int_{\R} \|u\|_{L^{\infty}(|y| \geq R)}^2\, d x_d \right)^{\sigma} \left(\int_{\R}\|u\|^{\frac{2}{1-\sigma}}_{L^2_y} \, d x_d \right)^{1-\sigma}.
\end{split}
\end{align}
In view of \eqref{gn} and the conservation of mass, we get that
\begin{align} \label{b12}
\begin{split}
\int_{\R}\|u\|^{\frac{2}{1-\sigma}}_{L^2_y} \, d x_d &\leq \left(\int_{\R} \left|\partial_d \left(\|u\|_{L^2_y}\right) \right|^2\, dx_d\right)^{\frac{\sigma}{2(1-\sigma)}} \left(\int_{\R}\|u\|^2_{{L^2_y}} \, dx\right)^{\frac{2-\sigma}{2(1-\sigma)}} \\
&\lesssim  \left(\int_{\R} \left|\partial_d \left(\|u\|_{L^2_y}\right) \right|^2\, dx_d\right)^{\frac{\sigma}{2(1-\sigma)}}.
\end{split}
\end{align}
Furthermore, notice that
\begin{align*}
\left|\partial_d \left(\|u\|_{L^2_y}\right) \right|\|u\|_{L^2_y}=\frac 12 \left|\partial_d \left(\|u\|^2_{L^2_y}\right) \right|=\frac 12  \left| \textnormal{Re} \int_{\R^{d-1}} \overline{u} \left(\partial_{d} u\right) \, dy\right| \leq \frac 12 \|\partial_d u\|_{L^2_y}\|u\|_{L^2_y}.
\end{align*}
This means that
\begin{align}\label{de}
\left|\partial_d \left(\|u\|_{L^2_y}\right) \right| \leq \frac 12 \|\partial_d u\|_{L^2_y}.
\end{align}
As a result, via \eqref{b12}, we obtain that
\begin{align} \label{b113}
\int_{\R}\|u\|^{\frac{2}{1-\sigma}}_{L^2_y} \, d x_d \lesssim \left(\int_{\R}\|\partial_d u\|_{L^2_y}^2 \, dx_d \right)^{\frac{\sigma}{2(1-\sigma)}}=\|\partial_d u\|_2^{\frac{\sigma}{1-\sigma}}.
\end{align}
Using \eqref{b111} and \eqref{b113}, we then obtain from \eqref{b112} that
\begin{align} \label{s2}
\int_{\R}\int_{|y| \geq R}  |u|^{2 \sigma +2} \,dx \lesssim R^{-\sigma(d-2) } \|\nabla _y u\|_2^{\sigma} \|\partial_d u\|_2^{\sigma} \lesssim R^{-\sigma(d-2)}\|\nabla u\|_2^{2\sigma}.
\end{align}
To sum up, it then follows from \eqref{b1}, \eqref{s1} and \eqref{s2} that
\begin{align*}
\mathcal{B}_{R}[u] \lesssim -\frac{2 \sigma d}{\sigma+1} \int_{\R^d}  |u|^{2 \sigma +2} \,dx +R^{-\sigma(d-2)}\|\nabla u\|_2^{2\sigma}.
\end{align*}
Accordingly, applying the estimates to $\mathcal{A}_R^{(1)}[u]$, $\mathcal{A}_R^{(2)}[u]$ and $\mathcal{B}_{R}[u]$ and the conservation of energy, we finally derive that
\begin{align*}
\frac{d}{dt}\mathcal{M}_R[u(t)] & \leq 8 \int_{\R^d} |\Delta u|^2 \,dx+4 \mu \int_{\R^d} \left|\nabla u\right|^2 \,dx -\frac{2 \sigma d}{\sigma+1} \int_{\R^d}  |u|^{2 \sigma +2} \,dx + X_{\mu}[u] \\
& \quad + \mathcal{O}\left(R^{-4} +R^{-2} \|\nabla u\|_2^2 + R^{-\sigma(d-2)} \|\nabla u\|_2^{2\sigma} + |\mu|R^{-2}\right) \\
&=4d\sigma E(u_0)-(2d \sigma -8)\|\Delta u\|^2_2 -\mu (2d \sigma -4)\|\nabla u\|^2_2 + X_{\mu}[u]\\
& \quad +\mathcal{O}\left(R^{-4} +R^{-2} \|\nabla u\|_2^2 + R^{-\sigma(d-2)} \|\nabla u\|_2^{2\sigma} + |\mu|R^{-2}\right),
\end{align*}
This completes the proof.
\end{proof}

\begin{proof}[Proof of Theorem \ref{thm1}]
Noting that $0<\sigma \leq 1$ and applying Lemma \ref{v}, then the proof can be completed by closely following the one of \cite[Theorem 1]{BL}.
\end{proof}

\begin{proof}[Proof of Theorem \ref{thm2}]
If $\mu>0$, using Lemma \ref{v} and arguing as the proof of \cite[Theorem 3]{BL}, we can get the desired result. To complete the proof, we only need to consider the case that $\mu=0$. In this case, we need to conduct a more refined analysis to the evolution of $\mathcal{M}[u(t)]$ along time. From the proof of Lemma \ref{v}, \eqref{property} and \eqref{d}, we first have that
\begin{align} \label{vm}
\begin{split}
\frac{d}{dt}\mathcal{M}_R[u(t)] &\leq 16 E(u_0)-8\int_{\R^d}\left(1-\partial_r^2 \psi_R\right)|\partial_r^2 u|^2 \, dx + \int_{\R^d} \left(\Delta^3 \psi_R \right)|u|^2 \,dx \\
& \quad -\int_{\R^d} A_R |\partial_r u|^2 \, dx + \int_{\R^N} B_R |u|^{2+ \frac 8 d} \, dx ,
\end{split}
\end{align}
where
$$
A_R:=4 \partial_r^2 \Delta \psi_R +2 \Delta^2 \psi_R(r), \quad B_R:=\frac{8}{d+4} \left(d-1-\Delta \psi_R\right).
$$
Thereinafter, we shall estimate the last two terms in the right hand side of \eqref{vm}. Using $(7.7)$ in \cite{BL} and the conservation of mass, we can derive that
\begin{align*}
\left|\int_{\R^d} A_R |\partial_r u|^2 \, dx \right| =\left|\int_{\R}\int_{\R^{d-1}} A_R |\partial_r u|^2 \, dydx_d \right|
&\lesssim  8 \eta R^4 \int_{\R} \|A_R \partial_r^2 u\|_{L^2_y}^2  \, dx_d+ \eta^{-1}R^{-4} \int_{\R} \|u\|_{L^2_y}^2 \,dx_d \\
&=8 \eta R^4 \|A_R \partial_r^2 u\|_2^2 + \eta^{-1}R^{-4},
\end{align*}
where $\eta>0$ is an arbitrary constant. We now treat the other term. To do this, we first consider the case that $d=4$. In this case, we have that
\begin{align}\label{mc0}
\begin{split}
\left|\int_{\R^d} B_R |u|^4 \, dx\right|=\left|\int_{\R}\int_{|y| \geq R} B_R |u|^4\, dydx_d \right| &\leq \int_{\R} \|u\|^2_{L^2_y} \|B_R^{\frac 12} u\|_{L^{\infty}(|y| \geq R)}^2 \, dx_d \\
&\leq \|u\|^2_{L^{\infty}(R, {L^2_y}(\R^{N-1}))} \int_{\R} \|B_R^{\frac 12} u\|_{L^{\infty}(|y| \geq R)}^2 \, dx_d,
\end{split}
\end{align}
because of $B_R=0$ for $0 \leq r \leq R$. In view of \eqref{st} with $d=4$ and the definition of $B_R$, we are able to infer that
\begin{align} \label{mc1}
\begin{split}
\int_{\R}\|B_R^{\frac 12} u\|_{L^{\infty}(|y| \geq R)}^2 \, dx_d &\lesssim R^{-2}\int_{\R}\|B_R^{\frac 12} u\|_{L^2_y}\|\nabla_y (B_R^{\frac 12} u)\|_{L^2_y} \,dx_d \\
&\lesssim R^{-2}\int_{\R}\|u\|_{L^2_y}\|\nabla_y (B_R^{\frac 12 } u)\|_{L^2_y} \,dx_d.
\end{split}
\end{align}
It follows from $(7.9)$ with $d=4$ in \cite{BL} that 
$$
\|\nabla_y (B_R^{\frac 12} u)\|_{L^2_y}^2  \lesssim \left(\eta^{-\frac 14} + R^{-2}\right) \|u\|_{L^2_y}^2 + 8 \eta^{\frac 14} \|B_R \partial_r^2 u\|_{L^2_y}^2.
$$
By means of \eqref{mc1}, the conservation of mass and H\"older's inequality, we then get that
\begin{align} \label{mc22} 
\begin{split}
\int_{\R}\|B_R^{\frac 1 2} u\|_{L^{\infty}(|y| \geq R)}^{2} \, dx_d &\lesssim R^{-2} \left(\eta^{-\frac 14} + R^{-2}\right) + 8R^{-2} \eta^{\frac 14} \int_{\R^d} \|u\|_{L^2_y} \|B_R \partial_r^2 u\|_{L^2_y} \, dx_d \\
& \lesssim R^{-2} \left(\eta^{-\frac 14} + R^{-2}\right) + 8 R^{-2} \eta^{\frac 14} \|B_R \partial_r^2 u\|_2.
\end{split}
\end{align}
Going back to \eqref{mc0}  and using \eqref{ud} and \eqref{mc22}, we then get that
\begin{align*}
\left|\int_{\R^N} B_R |u|^4 \, dx \right| &\lesssim R^{-2} \left(\eta^{-\frac 14} + R^{-2}\right) \|\partial_d u\|_2 +8R^{-2} \eta^{\frac 14} \|B_R\partial_r^2 u\|_2 \|\partial_d u\|_2 \\
& \leq  R^{-2} \left(\eta^{-\frac 14} + R^{-2}\right) \|\partial_d u\|_2 +R^{-4} \eta^{-\frac 12} \|\partial_d u\|_2^2 + 8 \eta \|B_R \partial_r^2 u\|_2^2.
\end{align*}
Taking into account \eqref{vm} and noting that $\|\Delta^2 \varphi_R\|_{\infty} \lesssim R^{-2}$, we then obtain that
\begin{align} \label{m1}
\begin{split}
\frac{d}{dt}\mathcal{M}_R[u(t)] &\leq 16 E(u_0)-8\int_{\R^d}\left(1-\partial_r^2 \psi_R-\eta \left(R^4 A_R^2 +B_R^2\right)\right)|\partial_r^2 u|^2 \, dx \\
&\quad + R^{-2} \left(\eta^{-\frac 14} + R^{-2}\right) \|\partial_d u\|_2 +R^{-4} \eta^{-\frac 12} \|\partial_d u\|_2^2 +\eta^{-1}R^{-4} + R^{-2}.
\end{split}
\end{align}
Next we consider the case that $d \geq 5$. In this case, by H\"older's inequality and the definition of $B_R$, we can obtain that
\begin{align} \label{d5}
\begin{split}
\left|\int_{\R^N} B_R |u|^{2 +\frac 8 d} \, dx \right| &=\left|\int_{\R} \int_{|y| \geq R} B_R |u|^{2 +\frac 8 d} \, dydx_d \right|  
\leq \int_{\R} \|B_R^{\frac 12} u\|_{L^{\infty}_y(|y| \geq R)}^{\frac 8 d} \|B_R^{\frac 1 2-\frac 2d} u\|_{L^2_y}^2\, dx_d \\
& \leq \left(\int_{\R} \|B_R^{\frac 12} u\|_{L^{\infty}_y(|y| \geq R)}^2 \, dx_d \right)^{\frac 4 d} \left(\int_{\R}\|u\|_{L^2_y}^{\frac{2d}{d-4}} \, dx_d\right)^{\frac{d-4}{d}}.
\end{split}
\end{align}
As an application of \eqref{gn} leads to
\begin{align} \label{sd}
\int_{\R}\|u\|_{L^2_y}^{\frac{2d}{d-4}} \, dx_d \lesssim \left(\int_{\R^d} \left|\partial_d \left(\|u\|_{L^2_y}\right)\right|^2\, dx_d\right)^{\frac{2}{d-4}} \left(\int_{\R} \|u\|_{L^2_y}^2 \,dx_d\right)^{\frac{d-2}{d-4}}.
\end{align}
It then follows from \eqref{de}, \eqref{sd} and the conservation of mass that
\begin{align} \label{mc11}
\int_{\R}\|u\|_{L^2_y}^{\frac{2d}{d-4}} \, dx_d \lesssim \left(\int_{\R^d} \|\partial_d u\|_{L^2_y}^2\, dx_d\right)^{\frac{2}{d-4}}=\|\partial_d u\|_2^{\frac{4}{d-4}}.
\end{align}
Therefore, coming back to \eqref{d5} and using \eqref{mc22} and \eqref{mc11}, we derive that
\begin{align*}
\left|\int_{\R^N} B_R |u|^{2 +\frac 8 d} \, dx \right|  &\lesssim \left(\int_{\R} \|B_R^{\frac 12} u\|_{L^{\infty}_y(|y| \geq R)}^2 \, dx_d\right)^{\frac 4 d}\|\partial_d u\|_2^{\frac 4 d} \\
& \lesssim R \int_{\R} \|B_R^{\frac 12} u\|_{L^{\infty}_y(|y| \geq R)}^2 \, dx_d + R^{-\frac{4}{d-4}} \|\partial_d u\|_2^{\frac{4}{d-4}} \\
& \leq R^{-1} \left(\eta^{-\frac 14} + R^{-2}\right)\|\partial_d u\|_2 + 8 R^{-1} \eta^{\frac 14} \|B_R \partial_r^2 u\|_2 + R^{-\frac{4}{d-4}} \|\partial_d u\|_2^{\frac{4}{d-4}}  \\
& \lesssim  R^{-1} \left(\eta^{-\frac 14} + R^{-2}\right)\|\partial_d u\|_2 + 8 \eta \|B_R \partial_r^2 u\|_2^2 + R^{-\frac{4}{d-4}} \|\partial_d u\|_2^{\frac{4}{d-4}} + R^{-2} \eta^{-\frac 12}.
\end{align*}
As a consequence, invoking \eqref{vm}, we finally derive that
\begin{align} \label{m2}
\begin{split}
\frac{d}{dt}\mathcal{M}_R[u(t)] &\leq 16 E(u_0)-8\int_{\R^d}\left(1-\partial_r^2 \psi_R-\eta \left(R^4 A_R^2 +B_R^2\right)\right)|\partial_r^2 u|^2 \, dx \\
&\quad + R^{-1} \left(\eta^{-\frac 14} + R^{-2}\right) \|\partial_d u\|_2 +R^{-4} \eta^{-\frac 12} \|\partial_d u\|_2^2 \\
& \quad+ R^{-\frac{4}{d-4}} \|\partial_d u\|_2^{\frac{4}{d-4}} + R^{-2} \eta^{-\frac 12}+\eta^{-1}R^{-4} + R^{-2}.
\end{split}
\end{align}
At this point, using \eqref{m1} and \eqref{m2} and reasoning as the proof of \cite[Theorem 3]{BL}, we are able to finish the proof. This completes the proof.
\end{proof}


\begin{thebibliography}{99}

\bibitem{BFi} {G. Baruch, G. Fibich:} {\it Singular solutions of the $L^2$-supercritical biharmonic nonlinear Schr\"odinger equation}, Nonlinearity 24 (6) (2011) 1843-1859.

\bibitem{BFM1} {G. Baruch, G. Fibich, E. Mandelbaum:} {\it Ring-type singular solutions of the biharmonic nonlinear Schr\"odinger equation}, Nonlinearity 23 (11) (2010) 2867-2887.

\bibitem{BFM2} {G. Baruch, G. Fibich, E. Mandelbaum:} {\it Singular solutions of the biharmonic nonlinear Schr\"odinger equation}, SIAM J. Appl. Math. 70 (8) (2010) 3319-3341. 

\bibitem{BF} {J. Bellazzini, L. Forcella:} {\it Dynamical collapse of cylindrical symmetric dipolar Bose-Einstein condensates}, Calc. Var. Partial Differential Equations 60 (6) (2021) Paper No. 229, 33 pp.

\bibitem{BFG} {J. Bellazzini, L. Forcella, V. Georgiev:} {\it Ground state energy threshold and blow-up for NLS with competing nonlinearities}, Ann. Sc. Norm. Super. Pisa Cl. Sci. (5) 24 (2) (2023) 955-988.

\bibitem{BCGJ1} {D. Bonheure, J.-B. Cast\'eras, T. Gou, L. Jeanjean:} {\it Strong instability of ground states to a fourth order Schr\"odinger equation}, Int. Math. Res. Not. IMRN 2019 (17) (2019) 5299-5315. 

\bibitem{BCGJ2} {D. Bonheure, J.-B. Cast\'eras, T. Gou, L. Jeanjean:} {\it Normalized solutions to the mixed dispersion nonlinear Schrödinger equation in the mass critical and supercritical regime}, Trans. Amer. Math. Soc. 372 (3) (2019) 2167-2212.

\bibitem{BL} {T. Boulenger, E. Lenzmann:} {\it Blowup for Biharmonic NLS}, Ann. Sci. \'Ec. Norm. Sup\'er. (4) 50 (3) (2017) 503-44.

\bibitem{D} {V.D. Dinh:} {\it Dynamics of radial solutions for the focusing fourth-order nonlinear Schr\"odinger equations}, Nonlinearity 34 (2) (2021) 776-821. 

\bibitem{DF} {V.D. Dinh, L. Forcella:} {\it Blow-up results for systems of nonlinear Schrödinger equations with quadratic interaction}, Z. Angew. Math. Phys. 72 (5) (2021) Paper No. 178, 26 pp.


\bibitem{F} {L. Forcella:} {\it On finite time blow-up for a 3D Davey-Stewartson system}, Proc. Amer. Math. Soc. 150 (12) (2022) 5421-5432. 

\bibitem{FIP} {G. Fibich, B. Ilan, G. Papanicolaou:} {\it Self-focusing with fourth-order dispersion}, SIAM J. Appl. Math. 62 (4) (2002) 1437-1462.

\bibitem{G} {Q. Guo:} {\it Scattering for the focusing $L^2$-supercritical and $H^2$-subcritical biharmonic NLS equations}, Comm. Partial Differential Equations 41 (2) (2016) 185-207. 

\bibitem{K} {V.I. Karpman:} {\it Stabilization of soliton instabilities by higher-order dispersion: Fourth-order non- linear schr\"odinger-type equations}, Phys. Rev. E 53 (2) (1996) R1336–R1339.

\bibitem{KS}{V.I. Karpman, A.G. Shagalov:} {\it Stability of solitons described by nonlinear Schr\"odinger-type equations with higher-order dispersion}, Phys. D 144 (1-2) (2000) 194-210.

\bibitem{M} {Y. Martel:} {\it Blow-up for the nonlinear Schr\"odinger equation in nonisotropic spaces}, Nonlinear Anal. 28 (12) (1997) 1903-1908.

\bibitem{MXZ} {C. Miao, G. Xu, L. Zhao:} {\it Global well-posedness and scattering for the focusing energy-critical nonlinear Schr\"odinger equations of fourth order in the radial case}, J. Differential Equations 246 (9) (2009) 3715-3749.

\bibitem{P} {B. Pausader:} {\it Global well-posedness for energy critical fourth-order Schr\"odinger equations in the radial case},  Dyn. Partial Differ. Equ. 4 (3) (2007) 197-225. 

\bibitem{P1} {B. Pausader:} {\it The cubic fourth-order Schr\"odinger equation}, J. Funct. Anal. 256 (8) (2009) 2473-2517. 

\bibitem{P2} {B. Pausader:} {\it The focusing energy-critical fourth-order Schr\"odinger equation with radial data}, Discrete Contin. Dyn. Syst. 24 (4) (2009) 1275-1292.

\bibitem{PX} {B. Pausader, S. Xia}, {\it Scattering theory for the fourth-order Schr\"odinger equation in low dimensions}, Nonlinearity 26 (8) (2013) 2175-2191.

\bibitem{St} {W.A. Strauss:} {\it Existence of solitary waves in higher dimensions}, Comm. Math. Phys. 55 (2) (1977) 149-162. 

\end{thebibliography}
\end{document}